\newtheorem{theorem}{Theorem}[section]
\newtheorem{proposition}[theorem]{Proposition}
\newtheorem{lemma}[theorem]{Lemma}
\newtheorem{example}[theorem]{Example}
\newtheorem{remark}[theorem]{Remark}
\newtheorem{definition}[theorem]{Definition}
\DeclareMathOperator{\rc}{\xrightarrow[]{\mathbb{r}}}
\DeclareMathOperator{\oc}{\xrightarrow[]{\mathbb{o}}}
\DeclareMathOperator{\cc}{\xrightarrow[]{\mathbb{c}}}
\DeclareMathOperator{\moc}{\xrightarrow[]{\mathbb{mo}}}
\DeclareMathOperator{\lmoc}{\xrightarrow[]{\mathbb{m}_l\mathbb{o}}}
\DeclareMathOperator{\rmoc}{\xrightarrow[]{\mathbb{m}_r\mathbb{o}}}
\DeclareMathOperator{\lmcc}{\xrightarrow[]{\mathbb{m}_l\mathbb{c}}}
\DeclareMathOperator{\rmcc}{\xrightarrow[]{\mathbb{m}_r\mathbb{c}}}
\DeclareMathOperator{\convcone}{\xrightarrow[]{\mathbb{c}_1}}
\DeclareMathOperator{\convctwo}{\xrightarrow[]{\mathbb{c}_2}}
\newcommand{\sol}{\text{\rm sol}}
\begin{document}

\title{Multiplicative order compact operators between vector lattices and $l$-algebras}
\maketitle
\author{\centering {Abdullah Ayd{\i}n$^{1}$$^{*}$, Svetlana Gorokhova$^{2}$}\\ 
\small $^1$ Department of Mathematics, Mu\c{s} Alparslan University, Mu\c{s}, 49250, Turkey, a.aydin@alparslan.edu.tr\\
\small $^2$ Southern Mathematical Institute of the Russian Academy of Sciences, Vladikavkaz, Russia, lanagor71@gmail.com\\
\small $*$ Corresponding Author}
\bigskip

{\bf{Abstract:} \rm 
In the present paper, we introduce and investigate the multiplicative order compact operators from vector lattices to $l$-algebras. A linear operator $T$ from a vector lattice $X$ to an $l$-algebra $E$ is said to be $\mathbb{omo}$-compact if every order bounded net $x_\alpha$ in $X$ possesses a subnet $x_{\alpha_\beta}$ such that $Tx_{\alpha_\beta}\moc y$ for some $y\in E$.
We also introduce and study $\mathbb{omo}$-$M$- and $\mathbb{omo}$-$L$-weakly compact operators from vector lattices to $l$-algebras.
\bigskip

{\bf{Keywords:} \rm vector lattice, $l$-algebra, $\mathbb{mo}$-convergence, $\mathbb{mo}$-continuous, 
$\mathbb{omo}$-compact, $\mathbb{omo}$-$M$-, and $\mathbb{omo}$-$L$-weakly compact operator.}
\bigskip

{\bf MSC2020:} {\normalsize 46A40, 46B42, 46J40,  47B65}
\maketitle

\section{Introduction}
Compact operators play significant role in the operator theory and its applications. 
Various kinds of classical convergences, like order and relatively uniform convergences, 
in vector lattices are not topological \cite[Thm.2]{Gor}, 
\cite[Thm.5]{DEM1}, \cite[Thm.2.2]{DEM2}. Fortunately, even without any topology, 
several natural types of compact operators can be investigated (see, e.g. \cite{AEEM2}). 
In the present paper, we introduce and investigate 
$\mathbb{omo}$-compact operators from vector 
lattices to $l$-algebras. Throughout the paper, all vector lattices are assumed 
to be real and Archimedean, and all operators to be linear. We denote by 
letters $X$ and $Y$ vector lattices, and by $E$ and $F$ $l$-algebras.

A net $x_\alpha$ in $X$:  
\begin{enumerate}
\item[-]  {\em $\mathbb{o}$-converges} to $x\in X$ (shortly, $x_\alpha\oc x$), 
if there exists a net $y_\beta\downarrow 0$ 
such that, for any $\beta$, there exists $\alpha_\beta$ satisfying
$|x_\alpha-x|\leq y_\beta$ for all $\alpha\geq\alpha_\beta$; 
\item[-]   {\em  $\mathbb{r}$-converges} to $x\in X$ (shortly, $x_{\alpha}\rc x$) 
if, for some $u\in X_+$, there exists a sequence $\alpha_n$ of indexes such that
$|x_\alpha-x|\le\frac{1}{n} u$ for all $\alpha\ge\alpha_n$ 
(see, e.g. \cite[1.3.4, p.20]{Ku}). 
\end{enumerate}
An operator $T: X \to Y$ is called:
\begin{enumerate}
\item[-] \textit{$\mathbb{o}$-bounded}, if $T$ takes order bounded sets to order bounded ones.  
\item[-] \textit{regular}, if $T=T_1-T_2$ with $T_1,T_2\ge 0$;
\item[-] \textit{$\mathbb{o}$-continuous}, if $Tx_\alpha\oc 0$ whenever $x_\alpha\oc 0$;
\item[-] \textit{$\mathbb{r}$-continuous}, if $Tx_\alpha \rc 0$ whenever $x_\alpha\rc 0$;
\end{enumerate}
The set ${\cal L}_b(X,Y)$ of $\mathbb{o}$-bounded operators from $X$ to $Y$ is a vector space.
Every regular operator is $\mathbb{o}$-bounded. The set ${\cal L}_r(X,Y)$ of all regular operators 
from $X$ to $Y$ is an ordered vector space with respect to the order: $T\ge 0$ if $Tx\ge 0$ for all $x\in X_+$,  
we write ${\cal L}_r(X):= {\cal L}_r(X,X)$, ${\cal L}_b(X)={\cal L}_b(X,X)$ etc.
If $Y$ is Dedekind complete then ${\cal L}_b(X,Y)$ coincides with ${\cal L}_r(X,Y)$ and is a Dedekind complete 
vector lattice \cite[Thm.1.67]{AB1} containing the set ${\cal L}_n(X,Y)$ of 
all $\mathbb{o}$-continuous operators from $X$ to $Y$ as a band 
\cite[Thm.1.73]{AB1}. It is clear that each positive and hence each regular operator is 
$\mathbb{r}$-continuous.

Assume that vector lattices $X$ and $Y$ are equipped with linear convergences 
$\mathbb{c}_1$ and $\mathbb{c}_2$ respectively.
An operator $T:X\to Y$ is called 
\begin{enumerate}
\item[-] {\em $\mathbb{c}_1\mathbb{c}_2$-continuous} (cf. \cite[Def.1.4]{AEG}), 
whenever $x_\alpha\convcone 0$ in $X$ implies $Tx_\alpha\convctwo 0$ in $Y$. 
\end{enumerate}
In the case when $\mathbb{c}_1 = \mathbb{c}_2$, we say that 
$T$ is {\em $\mathbb{c}_1$-continuous}. 
The collection of all $\mathbb{c}_1\mathbb{c}_2$-continuous operators 
from $X$ to $Y$ is denoted by ${\cal L}_{\mathbb{c}_1\mathbb{c}_2}(X,Y)$, and if $\mathbb{c}_1 = \mathbb{c}_2$, we denote 
${\cal L}_{\mathbb{c}_1\mathbb{c}_2}(X,Y)$ by ${\cal L}_{\mathbb{c}_1}(X,Y)$, and 
${\cal L}_{\mathbb{c}_1}(X,X)$ by ${\cal L}_{\mathbb{c}_1}(X)$. }

A vector lattice $X$ is called: 
\begin{enumerate}
\item[-] \textit{${l}$-algebra}, if $X$ is an associative algebra such that 
$x\cdot y\in X_+$ whenever $x,y\in X_+$.  
\end{enumerate}
An $l$-algebra $E$ is called: 
\begin{enumerate}
\item[-] {\em $d$-algebra}, if $u\cdot(x\wedge y)=(u\cdot x)\wedge(u\cdot y)$ 
and $(x\wedge y)\cdot u=(x\cdot u)\wedge(y\cdot u)$ for all $x,y\in E$ and $u\in E_+$; 
\item[-] {\em $f$-algebra} if $x\wedge y=0$ implies $(u\cdot x)\wedge y=(x\cdot u)\wedge y=0$ for all $u\in E_+$; 
\item[-] {\em semiprime} whenever the only nilpotent element in $E$ is $0$; 
\item[-] {\em unital} if $E$ has a positive multiplicative unit.
\end{enumerate}
Any vector lattice $X$ is a commutative $f$-algebra with respect to the {\em trivial 
algebra multiplication} $x\ast y = 0$ for all $x,y \in X$.

Let $\mathbb{c}$ be a linear convergence on $E$ (see, \cite[Def.1.6]{AEG}). 
The algebra multiplication in $E$ is called
\begin{enumerate}
\item[-]  
{\em right $\mathbb{c}$-continuous} {\em  $($resp., left $\mathbb{c}$-continuous}$)$ 
if $x_\alpha\cc x$ implies $x_\alpha\cdot y\cc x\cdot y$ 
(resp., $y\cdot x_\alpha\cc y\cdot x$) every $y\in E$ (cf. \cite[Def.5.3]{AEG}).
\end{enumerate}
\begin{enumerate}
\item[-] The right $\mathbb{c}$-continuous algebra multiplication will be referred to as {\em $\mathbb{c}$-continuous multiplication}.
\end{enumerate}

\begin{example}\label{TT_K}{\em
Consider $T, T_k \in {\cal L}_r(\ell^\infty)$ defined as follows:
$Tx := l(x)\cdot\mathbb{1_N}$ and 
$T_k x  = x\cdot\mathbb{1}_{\{m\in \mathbb{N} : m \ge k\}}$ for all $x\in\ell^\infty$ and all $k\in\mathbb{N}$,
where $l$ is a positive extension to $\ell^\infty$  of the functional $l(x) = \lim_{n\to\infty}x_n$ on the space $c$ of all convergent real sequences. Clearly, $T_k\downarrow\ge 0$. If $T_k\ge S\ge 0$ in ${\cal L}_r(\ell^\infty)$ for all $k\in\mathbb{N}$ then,
for every $p\in\mathbb{N}$
$$
   T_ke_p\ge Se_p\ge 0 \ \ \  (\forall k\in\mathbb{N}),
$$ 
where $e_p=\mathbb{I}_{\{p\}}\in\ell^\infty$. Since $T_ke_p=0$ for all $k>p$ then $Se_p=0$ for all $p\in\mathbb{N}$.
As $\ell^\infty=\ker(l)\oplus\mathbb{R}\cdot\mathbb{1}_{\mathbb{N}}$, $S=s\cdot T$ for some $s\in\mathbb{R}_+$, and hence
$$
   T_2\mathbb{1}_{\mathbb{N}}=\mathbb{1}_{\{m\in\mathbb{N}:m\ge 2\}}\ge s\cdot T\mathbb{1}_{\mathbb{N}}=
   s\cdot\mathbb{1}_{\mathbb{N}},
$$
which implies $s=0$, and hence $S=0$. Thus, $T_k\downarrow 0$. However, the sequence $T\circ T_k = T$ does not 
$\mathbb{o}$-converge to $0$, showing that the algebra multiplication in ${\cal L}_r(\ell^\infty)$ is not left 
$\mathbb{o}$-continuous. This also shows that, in unital $l$-algebras, $\mathbb{o}$-convergence can be properly weaker than 
$\mathbb{mo}$-convergence.}
\end{example}

A net $x_\alpha$ in $E$ {\em $\mathbb{m}_r\mathbb{c}$-converges} ({\em $\mathbb{m}_l\mathbb{c}$-converges}) to $x$ whenever
$$
   |x_\alpha-x|\cdot u\cc 0 \ \ \ \ (\text{respectively} 
   \ \ \ u\cdot |x_\alpha-x|\cc 0) \ \ \ (\forall u\in E_+),
$$	
briefly $x_\alpha\rmcc x$ and $x_\alpha\lmcc x$. In commutative algebras  
$\mathbb{m}_l\mathbb{c} \equiv \mathbb{m}_r\mathbb{c}$. 
Since $\mathbb{m}_l\mathbb{c}$-convergence turns to 
$\mathbb{m}_r\mathbb{c}$-convergence and vice versus, if we replace the algebra multiplication in $E$ 
by ``~$\hat{\cdot}$~" defined as follows: $x \hat{\cdot} y := y\cdot x$, we restrict ourselves to 
$\mathbb{m}_r\mathbb{c}$-convergence, denoting it by {\em $\mathbb{mc}$-convergence} (cf. \cite{Ay1,Ay2,AEG}). 

Let $X$ be a Dedekind complete vector lattice. Then ${\cal L}_r(X)$ is an unital Dedekind complete $l$-algebra 
under the operator multiplication, containing ${\cal L}_n(X)$ as an $l$-subalgebra. The algebra multiplication 
is: right $\mathbb{mo}$-continuous in ${\cal L}_r(X)$; and is both left and right $\mathbb{mo}$-continuous 
in ${\cal L}_n(X)$ \cite[Thm.2.1]{AEG2}.

\begin{example}\label{Example5.1}{\em (cf. \cite[Ex.3.1]{AlEG})
Let $E$ be an $f$-algebra of all bounded real functions on $[0,1]$ which differ from a constant on at most countable set of $[0,1]$. 
Let $T:E \to E$ be an operator that assigns to each $f\in E$ the constant function 
$Tf$ on  $[0,1]$ such that the set $\{x\in [0,1] : f(x) \ne (Tf)(x)\}$ is at most countable. Then $T$ is a rank one 
continuous in $\|.\|_\infty$-norm positive operator. 
Consider the following net indexed by finite subsets  of $[0,1]$: 
$$ 
f_\alpha(x) = \left\{
\begin{array}{ccc}
1 &\text{ if } & x \not\in \alpha\\
0 &\text{ if } & x \in \alpha
\end{array}
\right.
$$
Then $f_\alpha \downarrow 0$ in $E$, yet $\|f_\alpha \|_\infty =1$ for all 
$\alpha$. Thus, $T$ is neither  $\mathbb{omo}$- nor $\mathbb{mo}$-co\-n\-ti\-nuous.
However, $T$ is $\mathbb{r}$-co\-n\-ti\-nuous and since $E$ is unital,  $T$ is $\mathbb{mr}$-co\-n\-ti\-nuous.
}
\end{example}

The structure of the paper is as follows. In Section 2, we introduce $\mathbb{omc}$-compact operators from a vector lattice to an $l$-algebra and investigate their general properties with an emphasis on $\mathbb{omo}$- and $\mathbb{omr}$-cases. In Section 3, we investigate 
the domination problem for $\mathbb{omc}$-compact operators; we define and study $\mathbb{omo}$-$M$- and $\mathbb{omo}$-$L$-weakly compact operators. For further unexplained  terminology and notations, we refer to \cite{AB1,AB2,AEG2,AEG,AGG,AEEM1,AEEM2,Hu,Ku,Pag,Vu,Za}. 

\section{The properties of $\mathbb{omc}$-compact operators}

We begin with the following definition (cf. \cite[Def.2.12]{AEG2}).

\begin{definition}{\em
A subset $A$ of an $l$-algebra $E$ is called {\em $\mathbb{m}_r\mathbb{o}$-bounded} $($resp., {\em $\mathbb{m}_l\mathbb{o}$-bounded}$)$ if the set $A\cdot u$ $($resp., $u \cdot A$$)$ is order bounded for every $u\in E_+$.
An operator $T$ from a vector lattice $X$ to an $l$-algebra $E$ is called 
{\em $\mathbb{m}_r\mathbb{o}$-bounded} $($resp., {\em$\mathbb{m}_l\mathbb{o}$-bounded}$)$ if $T$ maps order bounded subsets of $X$ into 
$\mathbb{m}_r\mathbb{o}$-bounded $($resp., $\mathbb{m}_l\mathbb{o}$-bounded$)$ subsets of $E$.}
\end{definition}

As usual, we restrict to $\mathbb{m}_r\mathbb{o}$-bounded subsets and operators, and refer to them as $\mathbb{mo}$-bounded.
In any $l$-algebra $E$ with trivial multiplication, $x\ast y = 0$ for all $x,y\in E$, each subset $A$ of $E$ is $\mathbb{mo}$-bounded
and as result, every operator from a vector lattice $X$ to such an $l$-algebra $E$ is $\mathbb{mo}$-bounded. 
For elementary properties of $\mathbb{mo}$-bounded operators in $l$-algebras, we refer the reader to the paper \cite{AEG2}.

\begin{example}\label{mo-bounded but not o-bounded}{\em (cf.
\cite[Ex.6]{AEG}). Take a free ultrafilter $\cal{U}$ on natural numbers $\mathbb{N}$. 
Then a sequence $\lambda_n$ of reals  {\em converges along} $\cal{U}$ to $\lambda$ whenever 
$\{k\in\mathbb{N}:|\lambda_k-\lambda|\le\varepsilon\}\in\cal{U}$ for every $\varepsilon>0$. 
Hence, for any element $x:=(x_n)_{n=1}^\infty \in \ell^\infty$, the sequence $x_n$ converges along $\cal{U}$ to $x_{\cal{U}}:=\lim_{\cal{U}}x_n$. 
In that case, one can define an $l$-algebra multiplication $*$ in $\ell^\infty$ by $x*y:=(\lim_{\cal{U}}x_n)\cdot(\lim_{\cal{U}}y_n)\cdot\mathbb{1}$, 
where $\mathbb{1}$ is a sequence of reals identically equal to $1$. It is easy to see that $(\ell^\infty,*)$ is a $d$-algebra.
Then the set $A = \{ke_k : k\in {\mathbb N}\}$ is ${\mathbb {mo}}$-bou\-n\-ded yet not ${\mathbb o}$-bou\-nded.
}
\end{example}

\begin{remark}\label{basic properties of mo bounded}
Let $T$ be an operator from a vector lattice $X$ to an $l$-algebra $E$. Then 
{\em
\begin{enumerate}[(i)]
\item If $T$ is $\mathbb{o}$-bounded (in particular if $T$ is regular) then $T$ is $\mathbb{m}_l\mathbb{o}$- and $\mathbb{m}_r\mathbb{o}$-bounded.
		
\item If $T$ is $\mathbb{m}_l\mathbb{o}$- or $\mathbb{m}_r\mathbb{o}$-bounded operator and $E$ is unital $l$-algebra then $T$ is order bounded.
			
\item By \cite[Thm.2.6]{AEG2}, every $\mathbb{r}$-continuous operator $T$ from an Archimedean vector lattice to an Archimedean $l$-algebra is $\mathbb{rmo}$-continuous and then, by \cite[Thm.2.15]{AEG2}, $T$ is $\mathbb{mo}$-bounded.
			
\item It follows from \cite[Lem.1.4]{AB1} that every order continuous operator is order bounded and hence $\mathbb{m}_l\mathbb{o}$- and $\mathbb{m}_r\mathbb{o}$-bounded.
			
\item Every $\mathbb{mo}$-, $\mathbb{omo}$-, or $\mathbb{rmo}$-continuous is $\mathbb{m}_l\mathbb{o}$-bounded and $\mathbb{m}_r\mathbb{o}$-bounded. Moreover, every $\mathbb{m}_l\mathbb{o}$-, $\mathbb{om}_l\mathbb{o}$-, or $\mathbb{rm}_l\mathbb{o}$-continuous 
$($resp., $\mathbb{m}_r\mathbb{o}$-, $\mathbb{om}_r\mathbb{o}$-, or $\mathbb{rm}_r\mathbb{o}$--continuous$)$ is $\mathbb{m}_l\mathbb{o}$-bounded $($resp., $\mathbb{m}_r\mathbb{o}$-bounded$)$ \cite[Thm.2.14]{AEG2}.
\end{enumerate}}
\end{remark}

The converse of Remark \ref{basic properties of mo bounded}~(i) need not to be true in general. 
Indeed, in any $l$-algebra with trivial multiplication, every operator is $\mathbb{m}_l\mathbb{o}$- and $\mathbb{m}_r\mathbb{o}$-bounded. A more interesting example is given below.

\begin{example}{\em
Consider an operator $T$ from the vector lattice $X:=c$ the set of all convergent reel sequences to the $f$-algebra $E:=c_0$ of real sequence converging to zero, defined by 
$$
T(x_1,x_2,x_3,\cdots)=(x,x-x_1,x-x_2,x-x_3,\cdots),
$$
where $x=\lim\limits_{n\to\infty}x_n$. Then $T$ is an $\mathbb{m}_l\mathbb{o}$- and $\mathbb{m}_r\mathbb{o}$-bounded operator. However, it follows from $T(0,\cdots,0,1,1,\cdots)=(1,\cdots,1,0,0\cdots)$ that $T([0,\mathbb{1}])$ is not order bounded in $E$, and so, $T$ is not order bounded.}
\end{example}

The converse of Remark \ref{basic properties of mo bounded}~(iv) need not to be true in general.
To see this, we include the following example.

\begin{example}\label{order bounded not m cont}{\em (cf. \cite[Ex.2.8]{AEG2}).
Let $(\ell^\infty,*)$ be as in Example~\ref{mo-bounded but not o-bounded}.
Now, the identity operator $I:(\ell^{\infty},*)\to(\ell^{\infty},*)$ is order bounded, but not $\mathbb{omo}$-continuous. 
Indeed, take the characteristic functions $h_n=\mathbb{1}_{\{k\in\mathbb{N}:k\ge n\}}\in\ell^\infty$.
Then $h_n\oc 0$ in $\ell^\infty$ yet the sequence $|I(h_n)-I(0)|*\mathbb{1}=h_n*\mathbb{1}=\mathbb{1}$ does not $\mathbb{o}$-converge to 0. 
Thus, the sequence $I(h_n)$ does not $\mathbb{mo}$-converge to 0 and hence $I$ is not $\mathbb{omo}$-continuous.}
\end{example}

Remind that an operator between normed spaces is called \textit{compact} if it maps the closed unit ball to a relatively compact set. Equivalently, the operator is compact if, for each norm bounded sequence, there exists a subsequence such that the image of it is convergent. Motivated by this, we introduce the following notions.

\begin{definition}{\em 
An operator $T$ from a vector lattice $X$ to an $l$-algebra $E$ is called
\begin{enumerate}[(a)]
	
\item {\em $\mathbb{om}_r\mathbb{o}$-compact} $($resp., {\em $\mathbb{om}_l\mathbb{o}$-compact}$)$ if every order bounded net $x_\alpha$ in $X$ possesses a subnet $x_{\alpha_\beta}$ such that $Tx_{\alpha_\beta}\rmoc y$ $($resp., $Tx_{\alpha_\beta}\lmoc y$$)$ for some $y\in E$;

\item {\em $\mathbb{om}\mathbb{o}$-compact} if $T$ is both $\mathbb{om}_r\mathbb{o}$- and $\mathbb{om}_l\mathbb{o}$-compact;

\item {\em sequentially $\mathbb{om}_r\mathbb{o}$-compact} $($resp., {\em $\mathbb{om}_l\mathbb{o}$-compact}) if every order bounded sequence $x_n$ in $X$ possesses a subsequence $x_{n_k}$ such that $Tx_{n_k}\rmoc y$ $($resp., $Tx_{n_k}\lmoc y$$)$ for some $y\in E$;

\item {\em sequentially $\mathbb{om}\mathbb{o}$-compact} if $T$ is both  sequentially $\mathbb{om}_r\mathbb{o}$- and $\mathbb{om}_l\mathbb{o}$-compact.
\end{enumerate}}
\end{definition}

\begin{example}\label{ExamX}
{\em
Define an operator $T:c_0 \to c_0$ by
$$
T\left( \sum_{k=1}^\infty a_k e_k\right)  = \sum_{k=1}^\infty \frac{a_k}{k} e_k, 
$$
where $e_k=\mathbb{1}_{\{n\}}$ and  $a_k$ is a real sequence converging to zero.
Then $T$ is compact on the $f$-algebra $(c_0, \|.\|_\infty)$, and is $\mathbb{omo}$-compact.
}
\end{example}

In the next example, we show that there is an operator that is neither $\mathbb{omo}$-compact nor sequentially $\mathbb{omo}$-compact.

\begin{example}\label{neither omo-compact nor sequentially omo-compact}{\em
The identity operator on the $l$-algebra $L_\infty[0,1]$ with pointwise multiplication is neither 
$\mathbb{omo}$-compact nor sequentially $\mathbb{omo}$-compact. Indeed, take the sequence of Rademacher function $r_n(t)=sgn(\sin(2^n\pi t))$ on $[0,1]$. Clearly, $r_n$ is order bounded. Now, assume that $r_n$ has a $\mathbb{mo}$-convergent subnet $r_{n_\alpha}$, say
$r_{n_\alpha}\moc f$ for some $f\in L_\infty[0,1]$. Then $r_n\oc f$ and hence $r_n(t)\to f(t)$ almost everywhere
violating that $r_n(t)$ diverges on $[0,1]$ except countably many points of form $\frac{k}{m}$ for $k,m\in {\mathbb N}$. }
\end{example}

An $\mathbb{omo}$-compact operator need not be sequentially $\mathbb{omo}$-compact. To see this, we consider 
\cite[Ex.7]{AA} for the next example.

\begin{example}{\em
Consider the set $E:=\mathbb{R}^X$ of all real-valued functions on $X$ equipped with the product topology, where $X$ is the set of all strictly increasing maps from $\mathbb{N}$ to $\mathbb{N}$. It follows from \cite[Ex.3.10(i)]{Pag} that $E$ is a unital Dedekind complete $f$-algebra with respect to the pointwise operations and ordering.
\begin{enumerate}[(i)]
\item The identity map $\mathcal{I}$ on $E$ is an $\mathbb{om}_r\mathbb{o}$-compact operator. Indeed, assume that $f_\alpha$ is an order bounded net in $E$.
It follows from \cite[Ex.7(1)]{AA} that there exists a subnet $f_{\alpha_\beta}$ such that $f_{\alpha_\beta}\oc f$ for some $f\in E$. 
Since every $f$-algebra has $\mathbb{o}$-continuity algebra multiplication, it follows from \cite[Lm.5.5]{AEG} that $f_{\alpha_\beta}\rmoc f$. Therefore, $\mathcal{I}$ is $\mathbb{om}_r\mathbb{o}$-compact.
\item The identity map $\mathcal{I}$  on $E$ is not sequentially $\mathbb{om}_r\mathbb{o}$-compact.
Consider a sequence $f_n$ in $\{-1,1\}^X$. Then $f_n$ is order bounded
in $E$ and $f_n$ has no $\mathbb{o}$-convergent subsequence \cite[Ex.7(2)]{AA}. 
Thus, every subsequence does not $\mathbb{mo}$-converge because the $f$-algebra $E$ has a unit element.
\end{enumerate}}
\end{example}

\begin{remark}{\em 
It is known that any compact operator is norm continuous, but in general we may have a $\mathbb{omo}$-compact operator which is not $\mathbb{mo}$-continuous.
Indeed, denote by $\mathcal{B}$ the Boolean algebra of the Borel subsets of $[0,1]$ equals up to measure null sets. 
Let $\mathcal{U}$ be any ultrafilter on $\mathcal{B}$. Then it can be shown that the linear operator $\varphi_{\mathcal{U}}:L_\infty[0,1]\to\mathbb{R}$ defined by 
$$ 
\varphi_{\mathcal{U}}(f):=\lim\limits_{A\in\mathcal{U}}\frac{1}{\mu(A)}\int_{A}fd\mu
$$ 
is $\mathbb{omo}$-compact (see \cite[Lem.5.5]{AEG}) because the algebra multiplication in $\mathbb{R}$ is order continuous (cf. \cite{Hu,Pag}). However, it is not $\mathbb{mo}$-continuous.
}
\end{remark}

In any $l$-algebra $E$, $x\ge y$ implies $x\cdot u\ge y\cdot u$ for all $u\in E_+$. 
But, in general, the inequality $x\cdot u\ge 0$ for all $u\in E_+$ does not imply $x\ge 0$.

\begin{definition}{\em
An $l$-algebra $E$ is called {\em right straight $l$-algebra} $($resp., {\em left straight $l$-algebra}$)$ if $x\in E_+$ 
whenever $x\cdot u\in E_+$ $($resp., $x\cdot u\in E_+$$)$ for all $u\in E_+$. If an $l$-algebra $E$ is both left and 
right straight $l$-algebra, we say that $E$ is a {\em straight $l$-algebra}.}
\end{definition}

Clearly every unital $l$-algebra is  straight. An algebra in \cite[Ex.2.8)]{AEG2} gives an example of a $d$-algebra which is not a straight $l$-algebra. The following theorem is an $\mathbb{omo}$-version of \cite[Thm.2]{AA}.
\begin{theorem}\label{compact implies order bounded}
Let $T$ be an operator from a vector lattice $X$ to an $l$-algebra $E$. Then
\begin{enumerate}[$($i$)$]
\item if $E$ is right straight $l$-algebra $($resp., left straight $l$-algebra$)$ and $T$ is $\mathbb{om}_r\mathbb{o}$-compact $($resp., $\mathbb{om}_l\mathbb{o}$-compact$)$ operator then it is order bounded;
\item if $T$ is $\mathbb{om}_r\mathbb{o}$-compact $($resp., $\mathbb{om}_l\mathbb{o}$-compact$)$ operator then it is $\mathbb{m}_r\mathbb{o}$-bounded $($resp., $\mathbb{m}_l\mathbb{o}$-bounded$)$ operator.
\end{enumerate}
\end{theorem}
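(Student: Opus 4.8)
The plan is to prove both parts by contradiction, and to treat only the $\mathbb{m}_r\mathbb{o}$-case, since the $\mathbb{m}_l\mathbb{o}$-statement follows verbatim after replacing the product by $x\,\hat{\cdot}\,y:=y\cdot x$ (which interchanges left and right straightness and $\lmoc$ with $\rmoc$), exactly as in the paragraph introducing $\mathbb{mc}$-convergence. The common engine is a net indexed by the upward-directed set $(E_+,\le)$ that admits \emph{no eventually order bounded subnet}: given $B\subseteq E$ that is not order bounded, choose for each $w\in E_+$ some $b_w\in B$ with $|b_w|\not\le w$; if a subnet $b_{w_\gamma}$ were eventually dominated by a fixed $v\in E_+$, cofinality would furnish an index with simultaneously $w_\gamma\ge v$ and $|b_{w_\gamma}|\le v\le w_\gamma$, contradicting $|b_{w_\gamma}|\not\le w_\gamma$. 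I will also use the elementary inequality $|a\cdot u|\le|a|\cdot u$ for $a\in E$, $u\in E_+$ (write $a=a^+-a^-$ and recall that products of positive elements are positive), so that $Tx_{\alpha_\beta}\rmoc y$ forces $Tx_{\alpha_\beta}\cdot u\oc y\cdot u$ for every $u\in E_+$; as an $\mathbb{o}$-convergent net is eventually order bounded, every $u$-section of a $\rmoc$-convergent subnet is eventually order bounded.

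For part (ii), suppose $T$ is not $\mathbb{m}_r\mathbb{o}$-bounded. Then there are an order bounded $A\subseteq X$ and a fixed $u_0\in E_+$ with $B:=T(A)\cdot u_0$ not order bounded. Applying the construction above to $B$, pick $x_w\in A$ with $|Tx_w\cdot u_0|\not\le w$; the net $x_w$ is order bounded in $X$, so $\mathbb{om}_r\mathbb{o}$-compactness yields a subnet with $Tx_{w_\gamma}\rmoc y$, whence $Tx_{w_\gamma}\cdot u_0\oc y\cdot u_0$ is eventually order bounded. This contradicts the no-eventually-bounded-subnet property of $\{Tx_w\cdot u_0\}$, so $T$ is $\mathbb{m}_r\mathbb{o}$-bounded.

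For part (i), I would first invoke part (ii) to know $T$ is $\mathbb{m}_r\mathbb{o}$-bounded, and then argue by contradiction: if $T([a,b])$ is not order bounded for some interval, build the order bounded net $x_w\in[a,b]$ with $|Tx_w|\not\le w$ ($w\in E_+$) and extract, by compactness, a subnet with $Tx_{w_\gamma}\rmoc y$. Multiplying by $u$ gives $|Tx_{w_\gamma}|\rmoc|y|$, so $g_\gamma:=(|Tx_{w_\gamma}|-|y|)^+\rmoc 0$ with $g_\gamma\ge 0$ and $|Tx_{w_\gamma}|\le|y|+g_\gamma$; thus $g_\gamma\not\le w_\gamma-|y|$ along a cofinal family. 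The decisive step is to convert this multiplicative data into a genuine order bound, namely to show that right straightness forces $g_\gamma$ to be eventually order bounded by a fixed $G$; once $|Tx_{w_\gamma}|\le|y|+G$, cofinality of $w_\gamma$ produces the contradiction exactly as above.

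I expect this last step to be the main obstacle, and the difficulty is structural. Right straightness is a \emph{sign-reflecting} property of single elements ($z\cdot u\ge 0$ for all $u\in E_+$ implies $z\ge 0$), whereas order boundedness of the subnet is a \emph{uniform} statement; moreover the operator $c\to c_0$ discussed after Remark~\ref{basic properties of mo bounded} shows that $\mathbb{m}_r\mathbb{o}$-boundedness alone, even into a straight algebra, does not yield order boundedness, since there the multiplicative limit escapes $c_0$. Hence straightness must be fed precisely the limit $y\in E$ produced by compactness: applying the contrapositive of straightness to $w_\gamma-|Tx_{w_\gamma}|$ gives, for each $\gamma$, a multiplier $u_\gamma\in E_+$ witnessing the failure of the order inequality, and the crux is to defeat the $\gamma$-dependence of $u_\gamma$ — either by a diagonal/product-net refinement pinning a single multiplier, or by exploiting that the whole family $\{Tx_{w_\gamma}\}$ arises from one order bounded net together with the uniform bounds $|Tx|\cdot u\le c_u$ supplied by part (ii). Carrying out this reconciliation between the sign-reflecting hypothesis and the uniform conclusion is where the real work lies.
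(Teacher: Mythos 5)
Your treatment of part (ii) is correct, self-contained, and in fact more detailed than the paper's, which disposes of (ii) in one sentence as ``a modification of the proof of (i).'' The engine you use --- index a net by $(E_+,\le)$ so that $|b_w|\nleq w$, observe that no subnet of such a net can be eventually order bounded (by cofinality of the subnet's index map), then contradict this with the eventual order boundedness of $Tx_{w_\gamma}\cdot u_0$ obtained from $Tx_{w_\gamma}\rmoc y$ via $|a\cdot u|\le |a|\cdot u$ --- is exactly the skeleton of the paper's own argument for (i), transplanted to the multiplicative setting, where it closes without any straightness hypothesis. The reduction of the $\mathbb{m}_l\mathbb{o}$-case to the $\mathbb{m}_r\mathbb{o}$-case via the opposite product is also the paper's device and is unobjectionable.

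Part (i), however, is not proved. You build the same net $x_w$ with $|Tx_w|\nleq w$, extract $Tx_{w_\gamma}\rmoc y$, and then explicitly stop at what you call the decisive step: using right straightness to convert the multiplicative data into a single eventual order bound $|Tx_{w_\gamma}|\le |y|+G$. Your diagnosis of why this is hard is accurate --- straightness applied to $w_\gamma-|Tx_{w_\gamma}|\ngeq 0$ yields only a $\gamma$-dependent multiplier $u_\gamma$ with $|Tx_{w_\gamma}|\cdot u_\gamma\nleq w_\gamma\cdot u_\gamma$, while compactness controls $|Tx_{w_\gamma}|\cdot u$ eventually only for each \emph{fixed} $u$, and the two quantifiers are never reconciled --- but naming the obstacle is not overcoming it, so your proposal establishes (ii) only, not the theorem. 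For comparison, at precisely this point the paper argues: fix an arbitrary $w\in E_+$; compactness gives the eventual bound $|Tx_{\phi(v)}|\cdot w\le t$ (with $t$ depending on $w$); choose $v_1$ so that $\phi(v)\cdot w\ge t$ for all $v\ge v_1$; and conclude that $|Tx_{\phi(v)}|\nleq\phi(v)$ forces $|Tx_{\phi(v)}|\cdot w\nleq\phi(v)\cdot w$ ``because $E$ is right straight,'' whence $|Tx_{\phi(v)}|\cdot w\nleq t$, a contradiction. That is, the paper resolves your obstacle by applying straightness to the one fixed multiplier $w$ (reading the definition as if multiplication by each fixed positive element reflected non-inequalities) and by assuming $t$ is eventually dominated by $\phi(v)\cdot w$; whether these moves follow from the stated definition of a right straight $l$-algebra is exactly the question your closing paragraph raises. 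So your structural worry is aimed at the genuinely delicate step, but as written your part (i) is a gap, and it is not a gap you can close by simply citing the paper's one-line use of straightness, since that use presupposes the stronger, per-multiplier reflection property.
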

	
\begin{proof}
$(i)$ Suppose that $T$ is an $\mathbb{om}_r\mathbb{o}$-compact operator, but not order bounded. 
So, there is an order bounded subset $B$ of $X$ such that $T(B)$ is not order bounded in $E$. 		
Hence, for every $y\in E_+$, there exists some $x_y\in B$ such that $|T(x_y)|\nleq y$. 		

Since the net $(x_y)_{y\in E_+}$ is order bounded, there exists a subnet $(y_v)_{v\in \mathcal{V}\subseteq E_+}=(x_{\phi(v)})_{v\in \mathcal{V}\subseteq E_+}$ of $(x_y)_{y\in E_+}$ such that $T(y_v)$ is $\mathbb{m}_r\mathbb{o}$-converges to some $z\in E$, i.e., for each positive element $w\in E_+$, $|T(y_v)-z|\cdot w\oc 0$ because $T$ is $\mathbb{om}_r\mathbb{o}$-compact operator.
So, $|T(y_v)-z|\cdot w$ has an order bounded tail, which means that for an arbitrary positive element $w\in E_+$ there exist 
some indexes $v_0\in \mathcal{V}$ and elements $e\in E_+$ such that 
$$
|T(y_v)-z|\cdot w\leq e
$$
for each $v\geq v_0$. 
It follows from the inequality $|T(y_v)|\leq |T(y_v)-z|+|z|$ that we have $|T(y_v)|\cdot w\leq |T(y_v)-z|\cdot w+|z|\cdot w\leq e+|z|\cdot w$ for every $v\geq v_0$.
Now, fix $t:=e+|z|\cdot w\in E_+$. Then we have 
\begin{eqnarray}
	|T(x_{\phi(v)})|\cdot w\leq t       
\end{eqnarray}
for all $v\geq v_0$.
Now, take an index $v_1\in \mathcal{V}\subset E_+$ so that $\phi(v)\cdot w\geq t$ holds for all $v\geq v_1$.
Then, for any $v\geq v_0\vee v_1$, we have $|T(x_{\phi(v)})|\nleq \phi(v)$, and so, $|T(x_{\phi(v)})|\cdot w\nleq \phi(v)\cdot w$ because $E$ is right straight $l$-algebra. Therefore, we have 
$$
|T(x_{\phi(v)})|\cdot w\nleq t,
$$
which is a contradiction with $(1)$. Therefore, we obtain the desired result.	

$(ii)$ The proof is a modification of the proof $(i)$.
\end{proof}

The idea in Theorem \ref{compact implies order bounded} need not to be true in the case of sequentially $\mathbb{omo}$-compactness. 
To see this, we consider \cite[Lm.4 and Ex.6]{AA} for the following example.

\begin{example}\label{fi5}{\em
Let $F$ be the $l$-algebra of all bounded real-valued functions defined
on the real line with countable support,
and $E$ be the directed sum $\mathbb{R}\mathbb{1}\oplus F$, where $\mathbb{1}$ denotes the constant function taking the value $1$.
Define an operator $T$ from $E$ to $F$ as a projection such that the range is $F$ and the kernel is $\mathbb{R}\mathbb{1}$.
Then $T$ is a sequentially $\mathbb{om}_r\mathbb{o}$-compact, but not order bounded. 
Indeed, take an order bounded sequence $(f_n)$ in $E$. Then there exist some scalars $\lambda>0$ such that $|f_n|\leq \lambda$ for all $n$. 
On the other hand, we can write $f_n=\beta_n+g_n$ with reel numbers $\beta_n$ and functions $g_n$ in $F$. 
It follows from \cite[Ex.6]{AA} that $(g_{n_k})$ is order convergent in $F$. 
Then it is clear from \cite[Thm.VIII.2.3]{Vu} that $(g_{n_k})$ is also $\mathbb{m}_r\mathbb{o}$-convergent in $F$. 
On the other hand, it is clear that the image of the net $(\mathbb{1}_{x})_{x\in[0,1]}$ is not order bounded in $F$. Therefore, the operator $T$ is not order bounded. }
\end{example}

\begin{proposition}\label{leftandrightmultiplication}
Let $E$ be an $l$-algebra and $R,T,S$ are operators on $E$.
\begin{enumerate}[{\em (i)}]
\item If $T$ is $($se\-que\-n\-tial\-ly$)$ $\mathbb{om}_r\mathbb{o}$-compact $($resp., $\mathbb{om}_l\mathbb{o}$-compact$)$ and $S$ is $($se\-que\-n\-tial\-ly$)$ $\mathbb{m}_r\mathbb{o}$-continuous $($resp., $\mathbb{m}_l\mathbb{o}$-continuous$)$ then the operator $S\circ T$ is $($se\-que\-n\-tial\-ly$)$ $\mathbb{om}_r\mathbb{o}$-compact $($resp., $\mathbb{om}_l\mathbb{o}$-compact$)$.

\item If $T$ is $($sequentially$)$ $\mathbb{om}_r\mathbb{o}$-compact $($resp., $\mathbb{om}_l\mathbb{o}$-compact$)$ and $R$ is order bounded, then $T\circ R$ is $($se\-que\-n\-tial\-ly$)$ $\mathbb{om}_r\mathbb{o}$-compact $($resp., $\mathbb{om}_l\mathbb{o}$-compact$)$.

\item If $T$ is a positive, order continuous and $\mathbb{om}_r\mathbb{o}$-compact $($resp., $\mathbb{om}_l\mathbb{o}$-compact$)$ operator, $F$ is right straight $l$-algebra 
$($resp., left straight $l$-algebra$)$, and $S_\alpha\downarrow 0$ is a decreasing net of order bounded operators 
then $T\circ S_\alpha\downarrow 0$ is a decreasing net of $\mathbb{om}_r\mathbb{o}$-compact $($resp., $\mathbb{om}_l\mathbb{o}$-compact$)$ operators.
\end{enumerate}
\end{proposition}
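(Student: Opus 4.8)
The plan is to derive all three parts directly from the definitions of $\mathbb{om}_r\mathbb{o}$-compactness, $\mathbb{m}_r\mathbb{o}$-continuity and $\mathbb{m}_r\mathbb{o}$-convergence, using essentially only linearity; the ``resp.'' left-hand statements follow by the symmetric argument, so I would write out only the right case. For (i), I would start with an arbitrary order bounded net $x_\alpha$ in $E$ and invoke $\mathbb{om}_r\mathbb{o}$-compactness of $T$ to extract a subnet $x_{\alpha_\beta}$ with $Tx_{\alpha_\beta}\rmoc y$ for some $y\in E$. By definition this means $Tx_{\alpha_\beta}-y\rmoc 0$, so $\mathbb{m}_r\mathbb{o}$-continuity of $S$ gives $S(Tx_{\alpha_\beta}-y)\rmoc 0$, and linearity rewrites this as $(S\circ T)x_{\alpha_\beta}\rmoc Sy$. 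Hence the same subnet witnesses $\mathbb{om}_r\mathbb{o}$-compactness of $S\circ T$. The sequential statement is proved verbatim, replacing nets/subnets by sequences/subsequences and using sequential $\mathbb{m}_r\mathbb{o}$-continuity of $S$.

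For (ii), I would again take an order bounded net $x_\alpha$ in $E$. Since $R$ is order bounded, $(Rx_\alpha)$ is an order bounded net in $E$, so $\mathbb{om}_r\mathbb{o}$-compactness of $T$ provides a subnet with $T(Rx_{\alpha_\beta})\rmoc y$ for some $y\in E$. The only point needing a word is that any subnet of $(Rx_\alpha)$ is induced by a subnet $x_{\alpha_\beta}$ of $(x_\alpha)$, which is immediate from the definition of subnet; then $(T\circ R)x_{\alpha_\beta}=T(Rx_{\alpha_\beta})\rmoc y$, showing $T\circ R$ is $\mathbb{om}_r\mathbb{o}$-compact. The sequential case is identical.

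For (iii), I would assemble the conclusion from three observations. First, each $T\circ S_\alpha$ is $\mathbb{om}_r\mathbb{o}$-compact: this is exactly part (ii) applied with $R=S_\alpha$. Each $T\circ S_\alpha$ is order bounded, either directly since $T$ is positive and $S_\alpha$ is order bounded, or via Theorem~\ref{compact implies order bounded}(i) using the right-straightness hypothesis, so that the net indeed lives in the ordered operator space. Second, the net is decreasing and positive: for $\alpha\le\alpha'$ we have $S_\alpha\ge S_{\alpha'}\ge 0$, and positivity of $T$ yields $T\circ S_\alpha\ge T\circ S_{\alpha'}\ge 0$. It then remains only to identify the infimum of the net as $0$.

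To show $\inf_\alpha(T\circ S_\alpha)=0$, I would pass to pointwise behaviour on $E_+$. From $S_\alpha\downarrow 0$ in the operator lattice one obtains $S_\alpha x\downarrow 0$ in $E$ for every $x\in E_+$, since infima of downward directed nets of order bounded operators into a Dedekind complete lattice are computed pointwise on positive vectors (cf. \cite{AB1}). As $S_\alpha x\oc 0$, the order continuity and positivity of $T$ force the decreasing net $T(S_\alpha x)$ to satisfy $T(S_\alpha x)\downarrow 0$ in $E$. Finally, for any operator $R$ with $0\le R\le T\circ S_\alpha$ for all $\alpha$ and every $x\in E_+$ we have $0\le Rx\le T(S_\alpha x)\downarrow 0$, whence $Rx=0$ for all $x\in E_+$ and therefore $R=0$; this identifies $0$ as the greatest lower bound, i.e. $T\circ S_\alpha\downarrow 0$. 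The main obstacle is precisely this last step: the passage $S_\alpha\downarrow 0\Rightarrow S_\alpha x\downarrow 0$ rests on the pointwise computation of infima in the operator lattice and must be combined correctly with the $\mathbb{o}$-continuity of $T$, whereas parts (i) and (ii) are essentially formal.
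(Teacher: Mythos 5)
Your proposal is correct and takes essentially the same approach as the paper: (i) and (ii) are the identical subnet arguments, and in (iii) you obtain compactness of each $T\circ S_\alpha$ from (ii), monotonicity from positivity of $T$, and $\inf_\alpha (T\circ S_\alpha)=0$ by passing to pointwise infima on $E_+$ and using order continuity of $T$ --- precisely what the paper compresses into its citation of Vulikh's Theorem VIII.2.3. One cosmetic remark: to identify $0$ as the greatest lower bound you must handle an arbitrary lower bound $R$, not only those with $0\le R\le T\circ S_\alpha$; but your pointwise estimate $Rx\le T(S_\alpha x)$ together with $\inf_\alpha T(S_\alpha x)=0$ already yields $R\le 0$ for any lower bound, so the fix is immediate.
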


\begin{proof}
(i) Let $x_\alpha$ be an order bounded net in $E$. Since $T$ is $\mathbb{om}_r\mathbb{o}$-compact, there are a subnet $x_{\alpha_{\beta}}$ and $x\in E$ such that $Tx_{\alpha_{\beta}}\rmoc x$. 
It follows from the $\mathbb{m}_r\mathbb{o}$-continuity of $S$ that $S(Tx_{\alpha_{\beta}})\rmoc S(x)$. Therefore, $S\circ T$ is $\mathbb{om}_r\mathbb{o}$-compact. 

(ii) Assume $x_\alpha$ to be an order bounded net in $E$. Since $R$ is order bounded, the net $Rx_\alpha$ is order bounded. Now, the $\mathbb{om}_r\mathbb{o}$-compactness of $T$ implies 
that there are a subnet $x_{\alpha_{\beta}}$ and $z\in E$ such that $TRx_{\alpha_{\beta}}\rmoc z$. Therefore, $T\circ R$ is $\mathbb{om}_r\mathbb{o}$-compact.

(iii) Let $S_\alpha\downarrow 0$ be a net of order bounded operators. Then it follows from Theorem \ref{compact implies order bounded} 
that $T\circ S_\alpha$ is an order bounded operator, and also, $\mathbb{om}_r\mathbb{o}$-compact operator for each index $\alpha$ by $(ii)$. 
Moreover, since $T\ge 0$, we have $T\circ S_\alpha\downarrow$. On the other hand, by \cite[Thm.VIII.2.3]{Vu}, 
$T\circ S_\alpha\downarrow 0$ if and only if $T\circ S_\alpha x\downarrow 0$ for each $x\in E_+$. 
The result follows from $S_\alpha\downarrow 0$.
	
The sequential and $\mathbb{om}_l\mathbb{o}$-compact cases are analogous.
\end{proof}

\begin{proposition}\label{prop1}
Every order continuous finite rank operator on an  $l$-algebra $E$ with $\mathbb{o}$-continuous multiplication is $\mathbb{omo}$-compact. 
\end{proposition}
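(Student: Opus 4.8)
The plan is to reduce the statement, via the finite rank hypothesis, to a compactness argument inside a finite-dimensional order-unit normed space, and then to upgrade the resulting order convergence to $\mathbb{mo}$-convergence using the $\mathbb{o}$-continuity of the multiplication.

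First I would record that an order continuous operator is order bounded (see \cite[Lem.1.4]{AB1} and Remark~\ref{basic properties of mo bounded}(iv)). Hence, for an arbitrary order bounded net $x_\alpha$ in $E$, the image net $Tx_\alpha$ is order bounded, say $|Tx_\alpha|\le v$ for some $v\in E_+$ and all $\alpha$. I would then pass to the principal ideal $E_v$ generated by $v$, equipped with the order-unit norm $\|z\|_v=\inf\{\lambda>0:|z|\le\lambda v\}$. The Archimedean property guarantees that $\|\cdot\|_v$ is a genuine norm and that $[-v,v]$ is exactly its closed unit ball; in particular $\|Tx_\alpha\|_v\le 1$ for every $\alpha$.

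The key step exploits finiteness of the rank. Since $\operatorname{ran}(T)=:V$ is finite-dimensional, the whole net $\{Tx_\alpha\}$ lies in the finite-dimensional subspace $V\cap E_v$ of $(E_v,\|\cdot\|_v)$, where it is norm bounded. As bounded subsets of finite-dimensional normed spaces are relatively compact, the net $Tx_\alpha$ admits a subnet $Tx_{\alpha_\beta}$ that $\|\cdot\|_v$-converges to some $y\in V\cap E_v\subseteq E$. Because $\|\cdot\|_v$-convergence means $|Tx_{\alpha_\beta}-y|\le\varepsilon v$ eventually for every $\varepsilon>0$, and $\tfrac1k v\downarrow 0$ by the Archimedean property, this is precisely order convergence $Tx_{\alpha_\beta}\oc y$.

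Finally I would convert this order convergence into $\mathbb{mo}$-convergence: from $|Tx_{\alpha_\beta}-y|\oc 0$ and the $\mathbb{o}$-continuity of the multiplication in $E$ one obtains $|Tx_{\alpha_\beta}-y|\cdot u\oc 0$ for every $u\in E_+$ (this is exactly the content of \cite[Lm.5.5]{AEG}), i.e.\ $Tx_{\alpha_\beta}\rmoc y$, so that $T$ is $\mathbb{omo}$-compact. The main obstacle is the extraction step: one must produce a convergent subnet of an order bounded net living in a finite-dimensional subspace, with no ambient topology on $E$ to appeal to. The point is that restricting to the order interval $[-v,v]$ and to $V\cap E_v$ turns this into ordinary finite-dimensional compactness; crucially, one never needs a basis of $\operatorname{ran}(T)$ to sit inside $E_v$ --- only the values $Tx_\alpha$ themselves, which do.
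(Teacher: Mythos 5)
Your proof is correct, and it takes a genuinely different route from the paper's. The paper exploits the finite-rank hypothesis structurally: it writes $T=\sum_{k=1}^m x_k\otimes f_k$ with each $f_k$ in the order continuous dual $E'_n$ (this is where order continuity of $T$ is actually used), reduces without loss of generality to the rank-one case $T=x_1\otimes f_1$ with $x_1\ge 0$ and $f_1\ge 0$ (regularity of $f_1$ coming from Dedekind completeness of $E'_n$), and then applies Bolzano--Weierstrass to the bounded scalar net $f_1(z_\alpha)$, so that $Tz_{\alpha_\beta}=f_1(z_{\alpha_\beta})\,x_1\oc y$; the upgrade to $Tz_{\alpha_\beta}\moc y$ then falls out of the one-dimensional structure of the range, since $|Tz_{\alpha_\beta}-y|\cdot u=|f_1(z_{\alpha_\beta})-c|\,(x_1\cdot u)$. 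You instead keep $T$ intact and run the compactness argument inside the principal ideal $E_v$ generated by an order bound $v$ of the image net, using the order-unit norm $\|\cdot\|_v$ and Heine--Borel in the finite-dimensional space $\operatorname{ran}(T)\cap E_v$, then convert the resulting $\mathbb{r}$-convergence (hence $\mathbb{o}$-convergence, by the Archimedean property) into $\mathbb{mo}$-convergence via \cite[Lm.5.5]{AEG}. Each approach buys something: yours uses order continuity of $T$ only through order boundedness, so it in fact proves the formally stronger statement that every \emph{order bounded} finite rank operator on such an $E$ is $\mathbb{omo}$-compact, and it avoids the paper's ``without loss of generality rank one'' reduction, which strictly speaking requires an iterated subnet extraction across the $m$ summands; conversely, the paper's rank-one computation obtains the $\mathbb{mo}$-convergence without ever invoking the $\mathbb{o}$-continuity of the multiplication, a hypothesis your argument genuinely needs in the last step.
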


\begin{proof}
Let $T:E\to E$ be order continuous and $\dim(TE) <\infty$. Then 
$$
T = \sum_{k=1}^m x_k \otimes f_k 
\  \text{for} \   x_1, \dots, x_m \in E
\ \text{and} \   f_1, \dots, f_m \in E'_n.
$$
Without lost of generality, we may assume that $T = x_1 \otimes f_1$.
Since $E'_n$ is Dedekind complete, $f_1$ is regular, and $T$ is also regular. Without lost of generality, suppose  $x_1 \ge 0$ and $f_1 \ge 0$. Let $z_\alpha$ be an order bounded net in $E$.  
Then $Tz_\alpha = (x_1 \otimes f_1) (z_\alpha) =f_1 (z_\alpha)x_1$ is order bounded since every order continuous functional is order bounded. Since $\dim(TE) =1$, there exists a subnet $z_{\alpha_\beta}$ such that $Tz_{\alpha_\beta} \oc y \in T(E)$.
Using  $\dim(TE) =1$ again, we obtain $Tz_{\alpha_\beta}\moc y$. Therefore $T$ is $\mathbb{omo}$-compact. 
\end{proof}

The following result is an extension of Example~\ref{ExamX}.
\begin{proposition}\label{propX}
Let $E$ be an $l$-algebra with $\mathbb{o}$-continuous algebra multiplication. 
Then the algebra ${\cal L}_{rc}(E)$ of regular order compact operators is a subspace  
of ${\cal L}_{romo}(E)$, which is itself an right algebraic ideal of ${\cal L}_r(E)$.
\end{proposition}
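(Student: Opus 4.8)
The plan is to deduce both assertions from material already in hand, the decisive tool being the implication \cite[Lem.5.5]{AEG} that, under $\mathbb{o}$-continuity of the multiplication, $\mathbb{o}$-convergence is stronger than $\mathbb{m}_r\mathbb{o}$-convergence: if $z_\alpha\oc z$ in $E$ then, by right $\mathbb{o}$-continuity of the product, $|z_\alpha-z|\cdot u\oc 0$ for every $u\in E_+$, which is precisely $z_\alpha\rmoc z$. This is the same passage already exploited for the identity map on an $f$-algebra earlier in the section.

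First I would establish the inclusion $\mathcal{L}_{rc}(E)\subseteq\mathcal{L}_{romo}(E)$. Let $T$ be regular and order compact and let $x_\alpha$ be an order bounded net in $E$. Order compactness yields a subnet $x_{\alpha_\beta}$ and a $y\in E$ with $Tx_{\alpha_\beta}\oc y$; the implication above upgrades this to $Tx_{\alpha_\beta}\rmoc y$. Hence $T$ is $\mathbb{om}_r\mathbb{o}$-compact, and since $T$ is regular it belongs to $\mathcal{L}_{romo}(E)$. That $\mathcal{L}_{rc}(E)$ and $\mathcal{L}_{romo}(E)$ are genuine subspaces rests on the linearity of $\rmoc$ together with a standard two-step subnet (``diagonal'') extraction: given $T_1,T_2$ that are $\mathbb{om}_r\mathbb{o}$-compact and an order bounded net, first pass to a subnet along which $T_1x_{\alpha_\beta}\rmoc y_1$, then to a further subnet along which $T_2x_{\alpha_{\beta_\gamma}}\rmoc y_2$ (the first convergence persisting under the refinement), and combine them via $|(T_1+T_2)x-(y_1+y_2)|\cdot u\le|T_1x-y_1|\cdot u+|T_2x-y_2|\cdot u$ and the squeeze property of $\oc$. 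Scalar multiples are immediate and regularity is preserved under sums, so both intersections with $\mathcal{L}_r(E)$ are subspaces.

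Finally, for the right-ideal property I would invoke Proposition~\ref{leftandrightmultiplication}(ii). Take $T\in\mathcal{L}_{romo}(E)$ and $R\in\mathcal{L}_r(E)$; since $R$ is regular it is order bounded, so $T\circ R$ is $\mathbb{om}_r\mathbb{o}$-compact by that proposition, and it is regular as a composition of regular operators. Thus $T\circ R\in\mathcal{L}_{romo}(E)$, giving $\mathcal{L}_{romo}(E)\circ\mathcal{L}_r(E)\subseteq\mathcal{L}_{romo}(E)$; together with the subspace property this is exactly the assertion that $\mathcal{L}_{romo}(E)$ is a right algebraic ideal of $\mathcal{L}_r(E)$. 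The step I expect to be the genuine obstacle is the additivity of $\mathbb{om}_r\mathbb{o}$-compactness, i.e. making the diagonal subnet argument rigorous: one must verify that refining a subnet does not destroy the already-secured $\mathbb{m}_r\mathbb{o}$-limit, which hinges on $\oc$ (and hence $\rmoc$) being inherited by subnets, whereas the inclusion and the ideal closure are essentially immediate once \cite[Lem.5.5]{AEG} and Proposition~\ref{leftandrightmultiplication}(ii) are available.
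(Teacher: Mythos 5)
Your proof is correct and follows essentially the same route as the paper: the inclusion $\mathcal{L}_{rc}(E)\subseteq\mathcal{L}_{romo}(E)$ is obtained by upgrading the order limit of $Tx_{\alpha_\beta}$ to an $\mathbb{m}_r\mathbb{o}$-limit via \cite[Lm.5.5]{AEG}, and the right-ideal property by combining Proposition~\ref{leftandrightmultiplication}(ii) with the fact that regular operators are order bounded. The only difference is that you additionally verify closure of $\mathbb{om}_r\mathbb{o}$-compactness under sums by the subnet-refinement argument (which is valid, since $\mathbb{m}_r\mathbb{o}$-limits are inherited by subnets), a point the paper's proof leaves implicit.
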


\begin{proof}
Suppose that $T$ is a regular order compact operator on a right $\mathbb{o}$-continuous $l$-algebra $E$, and $x_\alpha$ is an order bounded net in $E$. Then there exist a subnet $x_{\alpha_\beta}$ and some $y\in E$ such that $Tx_{\alpha_\beta}\oc y$. It follows from \cite[Lm.5.5]{AEG} that $Tx_{\alpha_\beta}\rmoc y$. Thus, we obtain that $T$ is $\mathbb{om}_r\mathbb{o}$-compact. As the proof of $\mathbb{om}_l\mathbb{o}$-compactness is analogous, ${\cal L}_{rc}(E)$ is  subspace of ${\cal L}_{romo}(E)$. 
On the other hand, it is well known that ${\cal L}_r(E)$ is a subspace of ${\cal L}_b(E)$. 
It follows from Theorem \ref{leftandrightmultiplication}~(ii) that ${\cal L}_{romo}(E)$ is an right algebraic ideal of ${\cal L}_r(E)$.
\end{proof}

\section{Domination problem for compact operators}
	
In this section, we study the domination problem for $\mathbb{omo}$-compact operators, and we introduce the $\mathbb{omo}$-$M$- and $\mathbb{omo}$-$L$-weakly compact operators. Now, we consider the domination problem for positive  $\mathbb{mo}$($\mathbb{omo}$)-continuous and $\mathbb{omo}$-compact operators. We have a positive answer for $\mathbb{mo}$($\mathbb{omo}$)-continuous operators in the next lemma.

\begin{lemma}\label{dominated implies continuity}
Let $T$ and $S$ be positive operators between $l$-algebras $E$ and $F$ satisfying $0\le S\le T$. If $T$ is an $\mathbb{m}_r\mathbb{o}$-continuous $($resp., $\mathbb{m}_l\mathbb{o}$-, $\mathbb{om}_r\mathbb{o}$- and $\mathbb{om}_l\mathbb{o}$-continuous$)$ operator imply then $S$ has the same property.
\end{lemma}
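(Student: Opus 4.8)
The plan is to prove all four implications by a single sandwich argument, since the four notions differ only in the mode of convergence imposed on the domain (either $\mathbb{o}$ or $\mathbb{m}_r\mathbb{o}$) and in the side on which the defining multiplication acts (left or right). I would treat the two right-handed cases, $\mathbb{m}_r\mathbb{o}$- and $\mathbb{om}_r\mathbb{o}$-continuity, in detail; the left-handed cases follow verbatim after replacing right multiplication $(\,\cdot\,)\cdot v$ by left multiplication $v\cdot(\,\cdot\,)$ and $\rmoc$ by $\lmoc$. So I would begin by fixing a net $x_\alpha$ in $E$ that converges to $0$ in the relevant domain mode --- namely $x_\alpha\oc 0$ in the $\mathbb{om}_r\mathbb{o}$-case and $x_\alpha\rmoc 0$ in the $\mathbb{m}_r\mathbb{o}$-case --- and my goal is to show $Sx_\alpha\rmoc 0$, that is, $|Sx_\alpha|\cdot v\oc 0$ for every $v\in F_+$.

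The core of the argument is a chain of three elementary estimates. Positivity of $S$ gives the standard lattice inequality $|Sx_\alpha|\le S|x_\alpha|$; the domination hypothesis $0\le S\le T$, applied to the positive element $|x_\alpha|\in E_+$, gives $S|x_\alpha|\le T|x_\alpha|$; and since right multiplication by $v\in F_+$ preserves order in the $l$-algebra $F$ (as recorded just before the definition of a straight $l$-algebra), multiplying the inequalities through by $v$ yields the sandwich
$$
0\le |Sx_\alpha|\cdot v\le S|x_\alpha|\cdot v\le T|x_\alpha|\cdot v .
$$
Thus it only remains to show that the right-hand term tends to $0$ in order.

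For that I would invoke the assumed continuity of $T$, applied not to $x_\alpha$ but to $|x_\alpha|$. The net $|x_\alpha|$ converges to $0$ in the same mode as $x_\alpha$, because $x_\alpha\oc 0\iff|x_\alpha|\oc 0$ and $x_\alpha\rmoc 0\iff|x_\alpha|\rmoc 0$ (here one uses $\big||x_\alpha|\big|=|x_\alpha|$ directly against the definitions of $\mathbb{o}$- and $\mathbb{m}_r\mathbb{o}$-convergence from the introduction). Hence the hypothesis on $T$ gives $T|x_\alpha|\rmoc 0$, and since $T\ge 0$ forces $T|x_\alpha|\ge 0$, this reads precisely $T|x_\alpha|\cdot v\oc 0$ for each $v\in F_+$. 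Combining this with the displayed sandwich and the order-convergence squeeze ($0\le a_\alpha\le b_\alpha$ with $b_\alpha\oc 0$ implies $a_\alpha\oc 0$) yields $|Sx_\alpha|\cdot v\oc 0$, so $Sx_\alpha\rmoc 0$, as required. I expect the only delicate points --- and the places where I would be most careful --- to be confirming that convergence of $x_\alpha$ transfers to $|x_\alpha|$ in each mode and that the squeeze for order convergence is legitimately applicable; both are routine consequences of the definitions, so no genuine obstacle is anticipated, and the left-handed and $\mathbb{om}_l\mathbb{o}$-cases close identically.
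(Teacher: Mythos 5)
Your proposal is correct and follows essentially the same route as the paper's proof: the domination sandwich $|Sx_\alpha|\le S|x_\alpha|\le T|x_\alpha|$ (the paper writes it as $|Sx_\alpha-Sx|\le S(|x_\alpha-x|)\le T(|x_\alpha-x|)$, citing \cite[Lem.1.6]{AB2}), multiplication by a positive element, transfer of the convergence to absolute values, and the order-convergence squeeze. The only cosmetic differences are that you work at the limit $0$ (which matches the paper's definition of continuity) and verify the auxiliary facts directly from the definitions rather than citing \cite[Prop.2.4]{Ay1}.
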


\begin{proof}
Suppose that $T$ is an $\mathbb{m}_r\mathbb{o}$-continuous operator and $x_\alpha$ is $\mathbb{m}_r\mathbb{o}$-convergent to $x\in E$. Then we have $Tx_\alpha\rmoc Tx$ in $F$. It follows from \cite[Lem.1.6]{AB2} that
$$
0\leq |Sx_\alpha-Sx|\leq S(|x_\alpha-x|)\leq T(|x_\alpha-x|)
$$
holds for all $\alpha$ because $S$ is a positive operator. Hence, we get 
\begin{eqnarray}
|Sx_\alpha-Sx|\cdot u\leq T(|x_\alpha-x|)\cdot u
\end{eqnarray}
for all $u\in F_+$.
On the other hand, it follows from \cite[Prop.2.4]{Ay1} that $x_\alpha\rmoc x$ implies $|x_\alpha-x|\rmoc 0$, and so, we obtain $T(|x_\alpha-x|)\rmoc0$ by the $\mathbb{m}_r\mathbb{o}$-continuity of $T$, i.e., $T(|x_\alpha-x|)\cdot u\oc 0$ for all $u\in F_+$.
Hence, the desired result raises from the inequality $(2)$, $Sx_\alpha\rmoc Sx$ in $F$. The proof for the cases of $\mathbb{m}_l\mathbb{o}$-, $\mathbb{om}_r\mathbb{o}$- and $\mathbb{om}_l\mathbb{o}$-continuity are similar.
\end{proof}

Recall that a net $(x_\alpha)_{\alpha\in A}$ in an $l$-algebra is called $\mathbb{mo}$-Cauchy if the net $(x_\alpha-x_{\alpha'})_{(\alpha,\alpha')\in A\times A}$ is $\mathbb{mo}$-convergent to $0$. Moreover, an $l$-algebra is called $\mathbb{mo}$-complete if every $\mathbb{mo}$-Cauchy net is $\mathbb{mo}$-convergent; see \cite[Def.2.11]{Ay1}.

\begin{theorem}
Let $X$ be a vector lattice and $E$ be a Dedekind and sequentially $\mathbb{mo}$-complete $l$-algebra with $\mathbb{o}$-continuous algebra multiplication. 
If $T_m:X\to E$ is a sequence of sequential $\mathbb{omo}$-compact operators and $T_m\oc T$ in $\mathcal{L}_b(X,E)$ then $T$ is sequentially $\mathbb{omo}$-compact. 
\end{theorem}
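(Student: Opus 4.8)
The plan is to follow the classical template proving that an (order) limit of compact operators is compact, adapted to the $\mathbb{mo}$-setting: produce, by a diagonal construction, a single subsequence on which every $T_m$ behaves well, then show that its images under $T$ form an $\mathbb{mo}$-Cauchy sequence, and finally invoke sequential $\mathbb{mo}$-completeness of $E$ to extract the $\mathbb{mo}$-limit. Concretely, I fix an order bounded sequence $x_n$ in $X$, say $|x_n|\le u_0$ with $u_0\in X_+$. Using sequential $\mathbb{omo}$-compactness of $T_1$ I pass to a subsequence along which $T_1$ is $\mathbb{mo}$-convergent; inside it I use $T_2$, and so on, and then set $z_n:=x_n^{(n)}$, so that for every fixed $m$ the sequence $(T_m z_n)_n$ is $\mathbb{mo}$-convergent, hence $\mathbb{mo}$-Cauchy (order-null sequences remain order-null along subsequences).

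The heart of the argument is to control the error $T-T_m$ uniformly over the order bounded sequence. Since $E$ is Dedekind complete, $\mathcal{L}_b(X,E)=\mathcal{L}_r(X,E)$ is a Dedekind complete vector lattice and $|T-T_m|$ is defined. The hypothesis $T_m\oc T$ furnishes a net $S_\beta\downarrow 0$ in $\mathcal{L}_r(X,E)$ with $|T-T_m|\le S_\beta$ whenever $m\ge m_\beta$. Putting $w_\beta:=S_\beta u_0$, the pointwise action of a downward directed net of positive operators (as in \cite[Thm.VIII.2.3]{Vu}) gives $w_\beta\downarrow 0$ in $E$, and from $|Sx|\le|S|\,|x|$ one gets $|(T-T_m)z_n|\le|T-T_m|\,u_0\le w_\beta$ for all $n$ and all $m\ge m_\beta$. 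Finally, $\mathbb{o}$-continuity of the multiplication, together with order-monotonicity of multiplication by a positive element, upgrades this to $w_\beta\cdot u\downarrow 0$ for every $u\in E_+$.

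Next I would show that $(Tz_n)$ is $\mathbb{mo}$-Cauchy. Fix $u\in E_+$ and write, for each $\beta$ and the corresponding index $m_\beta$,
$$
|Tz_n-Tz_{n'}|\cdot u\le|(T-T_{m_\beta})z_n|\cdot u+|T_{m_\beta}z_n-T_{m_\beta}z_{n'}|\cdot u+|(T_{m_\beta}-T)z_{n'}|\cdot u\le 2\,w_\beta\cdot u+e^{(\beta)}_{n,n'},
$$
where $e^{(\beta)}_{n,n'}:=|T_{m_\beta}z_n-T_{m_\beta}z_{n'}|\cdot u\oc 0$ as $n,n'\to\infty$, because $(T_{m_\beta}z_n)_n$ is $\mathbb{mo}$-Cauchy. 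Using Dedekind completeness I form $\sigma_N:=\sup_{n,n'\ge N}|Tz_n-Tz_{n'}|\cdot u$ (the relevant set being order bounded, so the suprema exist); the displayed estimate gives $\sigma_N\le 2\,w_\beta\cdot u+\sup_{n,n'\ge N}e^{(\beta)}_{n,n'}$, and for fixed $\beta$ the last supremum decreases to $0$ in $N$, so $\inf_N\sigma_N\le 2\,w_\beta\cdot u$ for every $\beta$. Taking the infimum over $\beta$ forces $\inf_N\sigma_N=0$, i.e. $\sigma_N\downarrow 0$, whence $|Tz_n-Tz_{n'}|\cdot u\oc 0$ for every $u\in E_+$. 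Thus $(Tz_n)$ is $\mathbb{mo}$-Cauchy, and sequential $\mathbb{mo}$-completeness yields $Tz_n\moc y$ for some $y\in E$, so $T$ is sequentially $\mathbb{omo}$-compact.

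I expect the main obstacle to be the simultaneous double limit in the last step: the approximation term $2\,w_\beta\cdot u$ is small only for large $\beta$, while the Cauchy term $e^{(\beta)}_{n,n'}$ is small only for large $n,n'$, and these two directions cannot be synchronised directly. The device that resolves it is to pass to the order-$\limsup$ quantities $\sigma_N$, which exist precisely because $E$ is Dedekind complete; this recasts ``$\mathbb{mo}$-Cauchy'' as the single statement $\inf_N\sigma_N=0$ and allows the two infima to be taken independently. Beyond this, the steps requiring care are the justification of $w_\beta\downarrow 0$ (pointwise behaviour of the downward directed net $S_\beta$) and the order-boundedness of $\{|Tz_n-Tz_{n'}|\cdot u\}$ needed for the suprema $\sigma_N$ to be well defined; the $\mathbb{om}_l\mathbb{o}$-case is entirely analogous.
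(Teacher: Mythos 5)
Your proof is correct, and its ingredients are the same as the paper's: a diagonal extraction, the three-term triangle inequality with an approximating $T_m$ in the middle, \cite[Thm.VIII.2.3]{Vu} for the pointwise behaviour of monotone nets of positive operators, $\mathbb{o}$-continuity of the algebra multiplication, and sequential $\mathbb{mo}$-completeness of $E$. The organization, however, differs in where completeness is invoked. The paper first produces, for each $m$, the limit $y_m$ with $T_m x_{n_k}\rmoc y_m$ along the diagonal subsequence, shows that $(y_m)$ is $\mathbb{mo}$-Cauchy (via $|T_m-T_j|(w)\cdot u\oc 0$), applies completeness to $(y_m)$ to obtain the candidate $y$, and only then verifies $Tx_{n_k}\rmoc y$ by an order $\limsup$ in $k$ with $m$ fixed, letting $m\to\infty$ at the end. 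You instead apply completeness directly to the image sequence $(Tz_n)$, which obliges you to prove it is $\mathbb{mo}$-Cauchy and hence to decouple a genuine double limit ($\beta$ against $n,n'$); your tail suprema $\sigma_N$ do this correctly and exist by Dedekind completeness of $E$ --- the same completeness the paper implicitly needs for its order $\limsup$. What your route buys: the hypothesis $T_m\oc T$ is used in its raw form (a dominating net $S_\beta\downarrow 0$ with $|T-T_m|\le S_\beta$ for $m\ge m_\beta$), which yields the uniform-in-$n$ bound $|(T-T_{m_\beta})z_n|\cdot u\le w_\beta\cdot u$ with $w_\beta=S_\beta u_0\downarrow 0$, and the auxiliary limits $y_m$ never need to be named --- only the Cauchyness of each $(T_{m_\beta}z_n)_n$ is used. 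What the paper's route buys: once $y$ is in hand, the final convergence is a one-parameter limit computation, with none of the $\sigma_N$ bookkeeping. The two technical points you flag (eventual order boundedness of the tails so that $\sigma_N$ exists, and $w_\beta\cdot u\downarrow 0$ from monotonicity plus $\mathbb{o}$-continuity of the multiplication) are exactly the right ones and are handled as you say; I see no gap.
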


\begin{proof}
Let $x_n$ be a order bounded sequence in $X$, $T_m$ be a sequence of sequential $\mathbb{om}_r\mathbb{o}$-compact operators and $E$ be  sequentially $\mathbb{m}_r\mathbb{o}$-complete. Then there is $w\in X_+$ such that $|x_n|\leq w$ for all $n\in N$.
Also, by a standard diagonal argument, there exists a subsequence $x_{n_k}$ such that for any $m\in N$, $T_mx_{n_k}\rmoc y_m$ for some $y_m\in E$. 
Let's show that $y_m$ is a $\mathbb{mo}$-Cauchy sequence in $E$. Fix an arbitrary $u\in E_+$. Then we have
$$
|y_m-y_j|\cdot u\leq |y_m-T_mx_{n_k}|\cdot u+|T_mx_{n_k}-T_jx_{n_k}|\cdot u+|T_jx_{n_k}-y_j|\cdot u.
$$
Then the first and third terms in the last inequality both order converge to zero as $m\to\infty$ and $j\to\infty$, respectively. 
Since $T_m\oc T$ in vector lattice $\mathcal{L}_b(X,E)$, we have $|T_m- T_j|\oc 0$, and so, it follows from \cite[Thm.VIII.2.3]{Vu} that $|T_m- T_j|(x)\oc 0$ for all $x\in X$. Then, by using \cite[Thm.1.67(a)]{AB1}, we obtain the inequality 
$$
|T_mx_{n_k}-T_jx_{n_k}|\cdot u\leq |T_m- T_j|(|x_{n_k}|)\cdot u \leq |T_m- T_j|(w)\cdot u.
$$
Since $E$ has $\mathbb{o}$-continuous algebra multiplication, it follows from $\cite[Lem.5.5]{AEG}$ that $|T_m- T_j|(x)\oc 0$ implies $|T_m- T_j|(w)\cdot u\oc 0$. Hence, we obtain that $|T_mx_{n_k}-T_jx_{n_k}|\cdot u\oc0$. Therefore, $y_m$ is an $\mathbb{mo}$-Cauchy. 
Now, by sequentially $\mathbb{m}_r\mathbb{o}$-completeness of $E$, there is $y\in E$ such that $y_m\rmoc y$ in $E$ as $m\to\infty$. Hence,
\begin{eqnarray*}
|Tx_{n_k}-y|\cdot u&\leq& |Tx_{n_k}-T_mx_{n_k}|\cdot u+|T_mx_{n_k}-y_m|\cdot u+|y_m-y|\cdot u\\ &\leq& |T_m-T|(|x_{n_k}|)\cdot u+|T_mx_{n_k}-y_m|\cdot u+|y_m-y|\cdot u\\ &\leq& |T_m-T|(w)\cdot u+|T_mx_{n_k}-y_m|\cdot u+|y_m-y|\cdot u.
\end{eqnarray*}
Now, for fixed $m\in N$, and as $k\to\infty$, we have
$$
\limsup\limits_{k\to\infty}|Tx_{n_k}-y|\cdot u\leq |T_m-T|(w)\cdot u+|y_m-y|\cdot u.
$$ 
But $m\in N$ is arbitrary, so $\limsup\limits_{k\to\infty}|Tx_{n_k}-y|\cdot u=0$. Thus, $|Tx_{n_k}-y|\cdot u\oc 0$, i.e., $Tx_{n_k}\rmoc y$. Therefore, $T$ is sequentially $\mathbb{om}_r\mathbb{o}$-compact.

The sequentially $\mathbb{om}_l\mathbb{o}$-compact case is analogous.
\end{proof}

In the rest of the section, we discuss $\mathbb{omo}$-$M$- and $\mathbb{omo}$-$L$-weakly compact operators.
Remind that a norm bounded operator $T$ from a normed lattice $X$ into a normed space $Y$ is called $M$-weakly compact if $Tx_n\xrightarrow{\Vert\cdot\Vert}$0 holds for every norm bounded disjoint sequence $x_n$ in $X$. 
Also, a norm bounded operator $T$ from a normed space $Y$ into a normed lattice $X$ is called $L$-weakly compact whenever $\lim\|x_n\|=0$ holds for every disjoint sequence $x_n$ in the solid hull $\sol(T(B_Y)):=\{x\in X:\exists y\in T(B_Y) \ \text{with} \ |x|\leq|y|\}$
of $T(B_Y)$, where $B_Y$ is the closed unit ball of $Y$. Similarly we have the following notions.

\begin{definition} {\em
Let $T:X\to E$ be a sequentially $\mathbb{mo}$-continuous operator.
\begin{enumerate} 
\item[(1)] If $Tx_n\moc 0$ for every order bounded disjoint sequence $x_n$ in $X$ then $T$ is said to be {\em $\mathbb{omo}$-$M$-weakly compact}.
\item[(2)] If $y_n\rmoc 0$ for every disjoint sequence $y_n$ in $\sol(T(A))$, where $A$ is any order bounded subset of $X$, then $T$ is said to be {\em $\mathbb{omo}$-$L$-weakly compact}.
\end{enumerate}}
\end{definition}

\begin{proposition}
Let $T$ be an order bounded $\sigma$-order continuous operator from a normed lattice $X$ to an $l$-algebra $E$ with $\mathbb{o}$-continuous algebra multiplication. Then $T$ is $\mathbb{omo}$-$M$- and $\mathbb{omo}$-$L$-weakly compact.
\end{proposition}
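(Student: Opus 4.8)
The plan is to reduce both weak compactness properties to a single classical disjointness fact combined with the hypothesis that multiplication in $E$ is $\mathbb{o}$-continuous. First I would dispose of the standing requirement built into the definition, namely that $T$ be sequentially $\mathbb{mo}$-continuous: since $T$ is $\sigma$-order continuous, an order null sequence $x_n\oc 0$ in $X$ is sent to an order null sequence $Tx_n\oc 0$ in $E$, and because $E$ has $\mathbb{o}$-continuous algebra multiplication, \cite[Lm.5.5]{AEG} upgrades this to $Tx_n\moc 0$. Thus $T$ carries order null sequences to $\mathbb{mo}$-null sequences, which is exactly the continuity demanded in the definition of the two classes.

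The key ingredient is the well-known fact that in any Archimedean vector lattice every order bounded disjoint sequence is order convergent to $0$; normed lattices are Archimedean, so this applies both to $X$ and, being an $l$-algebra, to $E$. For the $\mathbb{omo}$-$M$-weakly compact part I would take an order bounded disjoint sequence $x_n$ in $X$, conclude $x_n\oc 0$ from this fact, apply $\sigma$-order continuity to get $Tx_n\oc 0$, and invoke \cite[Lm.5.5]{AEG} once more to obtain $Tx_n\moc 0$. This is precisely condition $(1)$ of the definition.

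For the $\mathbb{omo}$-$L$-weakly compact part the argument is even more direct and uses only order boundedness of $T$ (not $\sigma$-order continuity). Let $A\subseteq X$ be order bounded. Since $T$ is order bounded, $T(A)$ is order bounded in $E$, say $T(A)\subseteq[-w,w]$, and hence its solid hull $\sol(T(A))$ is order bounded as well, because $|x|\le|y|$ with $y\in T(A)$ forces $|x|\le w$. Any disjoint sequence $y_n$ in $\sol(T(A))$ is then order bounded and disjoint, so $y_n\oc 0$ by the classical fact, and \cite[Lm.5.5]{AEG} yields $y_n\rmoc 0$. This is condition $(2)$.

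The only genuinely nontrivial point is the classical statement that order bounded disjoint sequences are order null; everything else is a mechanical combination of $\sigma$-order continuity, order boundedness of $T$, and the passage from $\oc$ to $\moc$ supplied by the $\mathbb{o}$-continuity of the multiplication in $E$. I would therefore isolate and cite that disjointness fact (it is standard, e.g. \cite{AB1,Za}) and present the two verifications as the short parallel paragraphs above; I expect no further obstacle, since both reductions terminate in a single application of \cite[Lm.5.5]{AEG}.
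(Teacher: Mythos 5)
Your proof is correct and takes essentially the same route as the paper's: both arguments reduce each property to the fact that an order bounded disjoint sequence is $\mathbb{o}$-null (the paper cites \cite[Rem.10]{AEEM1} for this, you cite the classical sources) and then upgrade $\oc$ to $\moc$ via the $\mathbb{o}$-continuity of the multiplication in $E$, treating the $L$-case by noting that $\sol(T(A))$ is order bounded. The only difference is cosmetic: you spell out why $T$ is sequentially $\mathbb{mo}$-continuous (correctly combining $\sigma$-order continuity of $T$ with the multiplication lemma), a step the paper asserts more tersely.
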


\begin{proof}
Clearly, $T$ is sequentially $\mathbb{mo}$-continuous operator because $E$ has $\mathbb{o}$-continuous algebra multiplication; see \cite[Lem.5.5]{AEG2}.
Let $x_n$ be an order bounded disjoint sequence in $X$. Then it follows from \cite[Rem.10]{AEEM1} that we get $x_n\oc 0$. Thus, we have $Tx_n\moc 0$. Therefore, $T$ is $\mathbb{omo}$-$M$-weakly compact.

Now, we show that $T$ is $\mathbb{omo}$-$L$-weakly compact. Let $A$ be an order bounded set in $X$.
Thus, $T(A)$ is order bounded, and so, $\sol(T(A))$ is an order bounded set in $E$. 
Take an arbitrary disjoint sequence $y_n$ in $\sol(T(A))$. Then, using \cite[Rem.10]{AEEM1}, we have $y_n\oc 0$, and so, $y_n\moc o$ since $E$ has $\mathbb{o}$-continuous algebra multiplication; see \cite[Lem.5.5]{AEG2}. Thus, $T$ is $\mathbb{omo}$-$L$-weakly compact.
\end{proof}

Similarly to \cite[Cor.2.3]{AlEG}, we obtain the following result.
\begin{proposition}\label{dominated}
Let $T,S:X\to E$ be two linear operators from a normed lattice $X$ to an $l$-algebra $E$ such that $0\le S\le T$. If $T$ is $\mathbb{omo}$-$M$- or $\mathbb{omo}$-$L$-weakly compact then $S$ has the same property.
\end{proposition}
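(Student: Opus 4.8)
The plan is to mirror the classical domination argument for $M$- and $L$-weakly compact operators, replacing norm convergence by $\mathbb{mo}$-convergence and exploiting that right multiplication by a positive element is order-monotone in an $l$-algebra. I will use repeatedly two facts already recorded in the excerpt: that $x\ge y$ implies $x\cdot u\ge y\cdot u$ for every $u\in E_+$, and that order convergence obeys the squeeze principle, i.e. $0\le a_n\le b_n$ together with $b_n\oc 0$ forces $a_n\oc 0$. Before treating either case I would first observe that $S$ is itself sequentially $\mathbb{mo}$-continuous: this is exactly the sequential analogue of Lemma~\ref{dominated implies continuity} applied to $0\le S\le T$, whose proof transfers verbatim from nets to sequences. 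Hence $S$ is an admissible candidate in both parts of the definition.

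For the $\mathbb{omo}$-$M$-weakly compact case, let $x_n$ be an order bounded disjoint sequence in $X$. Since disjointness is phrased through absolute values, $|x_n|$ is again an order bounded disjoint sequence, now positive. Applying the $\mathbb{omo}$-$M$-weak compactness of $T$ to $|x_n|$ yields $T|x_n|\moc 0$, and because $T|x_n|\ge 0$ this reads $T|x_n|\cdot u\oc 0$ for every $u\in E_+$. From \cite[Lem.1.6]{AB2} together with $0\le S\le T$ I obtain the estimate $0\le|Sx_n|\le S|x_n|\le T|x_n|$; multiplying on the right by an arbitrary $u\in E_+$ and invoking monotonicity of multiplication gives $0\le|Sx_n|\cdot u\le T|x_n|\cdot u$. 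The squeeze principle then forces $|Sx_n|\cdot u\oc 0$, that is $Sx_n\moc 0$, so $S$ is $\mathbb{omo}$-$M$-weakly compact.

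For the $\mathbb{omo}$-$L$-weakly compact case the crucial step is a solid-hull inclusion. I would fix an order bounded set $A\subseteq X$ and establish
$$
\sol(S(A))\subseteq\sol(T(\sol(A))).
$$
Indeed, if $|w|\le|Sx|$ for some $x\in A$, then $|Sx|\le S|x|\le T|x|$ while $|x|\in\sol(A)$, so $|w|\le T|x|=|T|x||$ with $T|x|\in T(\sol(A))$. Since $A$ is order bounded, $\sol(A)$ is order bounded as well; writing $A':=\sol(A)$, any disjoint sequence $z_n$ in $\sol(S(A))$ is then a disjoint sequence lying in $\sol(T(A'))$. The $\mathbb{omo}$-$L$-weak compactness of $T$ applied to the order bounded set $A'$ gives $z_n\rmoc 0$, which is precisely what is required for $S$.

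I expect the only genuinely delicate point to be the solid-hull inclusion in the $L$-case: one must check that $|x|$ lands in $\sol(A)$ (not merely in $A$), that $\sol(A)$ remains order bounded, and that passing to solid hulls preserves the inclusion $S(A)\subseteq\sol(T(\sol(A)))$. The remaining manipulations are routine applications of monotonicity of right multiplication and of the order-convergence squeeze, and the $M$-case is entirely straightforward once one notes that $|x_n|$ inherits both disjointness and order boundedness.
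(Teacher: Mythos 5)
Your proof is correct and takes essentially the same route as the paper: the $M$-case is the identical domination-plus-squeeze argument via $0\le|Sx_n|\cdot u\le S|x_n|\cdot u\le T|x_n|\cdot u$ (after securing the required $\mathbb{mo}$-continuity of $S$ from the domination lemma), and the $L$-case rests on the same solid-hull inclusion, with the immaterial difference that you pass through $\sol(A)$ where the paper uses $|A|=\{|a|:a\in A\}$. If anything, your ending of the $L$-case—applying the defining property of $T$ directly to the order bounded set $\sol(A)$ to get $z_n\rmoc 0$—is cleaner than the paper's somewhat garbled final appeal to its inequality (3).
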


\begin{proof}
Suppose that $T$ is an $\mathbb{omo}$-$M$-weakly compact operator. Thus, it follows from Lemma \ref{dominated implies continuity} that $S$ is an $\mathbb{mo}$-continuous operator. Let $x_\alpha$ be an order bounded disjoint net in $X$. So, $|x_n|$ is also order bounded and disjoint. Since $T$ is $\mathbb{omo}$-$M$-weakly compact, $T(|x_n|)\moc 0$ in $E$. 
Following from the inequality 
\begin{eqnarray}
0\leq |Sx_n|\cdot u\leq S(\lvert x_n\rvert)\cdot u\leq T(\lvert x_n\rvert)\cdot u
\end{eqnarray}
for all $n\in \mathbb{N}$ and for every $u\in E_+$ (cf. \cite[Lem.1.6]{AB2}), we get $Sx_n\moc 0$ in $E$. 
Thus, $S$ is $\mathbb{omo}$-$M$-weakly compact.

Next, we show that $S$ is $\mathbb{omo}$-$L$-weakly compact. Let $A$ be an order bounded subset of $X$. 
Put $\lvert A\rvert=\{\lvert a\rvert: a\in A\}$. Clearly, $\sol(S(A))\subseteq \sol(S(\lvert A\rvert))$ and since $0\leq S\leq T$, 
we have $\sol(S(\lvert A\rvert))\subseteq \sol(T(\lvert A\rvert))$. 
Let $y_n$ be a disjoint sequence in $\sol(S(A))$ then $y_n$ is in $\sol(T(\lvert A\rvert))$ and, since $T$ is $\mathbb{omo}$-$L$-weakly compact then $T(\lvert x_n\rvert)\moc 0$ in $E$. Therefore, by inequality $(3)$, $S$ is $\mathbb{omo}$-$L$-weakly compact.
\end{proof}

\begin{proposition}
If $T:X\to E$ is an $\mathbb{omo}$-$L$-weakly compact lattice homomorphism then $T$ is $\mathbb{omo}$-$M$-weakly compact.
\end{proposition}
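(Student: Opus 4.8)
The plan is to verify the defining condition of $\mathbb{omo}$-$M$-weak compactness directly, by feeding an order bounded disjoint sequence into the hypothesis of $\mathbb{omo}$-$L$-weak compactness. Note first that, since $T$ is $\mathbb{omo}$-$L$-weakly compact, it is by definition already sequentially $\mathbb{mo}$-continuous, so the standing requirement in the definition of $\mathbb{omo}$-$M$-weak compactness is automatically met; it remains only to check the disjoint-sequence condition.

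I would begin by taking an arbitrary order bounded disjoint sequence $x_n$ in $X$ and fixing $w\in X_+$ with $|x_n|\le w$ for all $n$. Setting $A:=[-w,w]$, an order bounded subset of $X$, one has $x_n\in A$ and therefore $Tx_n\in T(A)\subseteq\sol(T(A))$ for every $n$. The crucial step is then to invoke the hypothesis that $T$ is a lattice homomorphism in order to transfer disjointness: for $n\ne m$,
\[
|Tx_n|\wedge|Tx_m|=T|x_n|\wedge T|x_m|=T(|x_n|\wedge|x_m|)=T(0)=0,
\]
so that $Tx_n$ is again a disjoint sequence in $E$. At this point $Tx_n$ is a disjoint sequence lying in $\sol(T(A))$ with $A$ order bounded, which is exactly the situation covered by the $\mathbb{omo}$-$L$-weak compactness of $T$; applying that hypothesis yields $Tx_n\rmoc 0$, that is $Tx_n\moc 0$ (recall $\mathbb{mo}\equiv\mathbb{m}_r\mathbb{o}$ by the paper's convention), which is precisely the required conclusion.

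I expect the only genuine obstacle to be the disjointness transfer $x_n\perp x_m\Rightarrow Tx_n\perp Tx_m$: this is where the lattice homomorphism assumption is essential and cannot be weakened, since a general positive operator need not preserve disjointness. Everything else --- the choice of the order interval $A$, the membership $Tx_n\in\sol(T(A))$, and the direct appeal to the hypothesis --- is routine and requires no estimate beyond the definitions.
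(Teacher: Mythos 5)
Your proof is correct and follows essentially the same route as the paper's: transfer disjointness of $x_n$ to $Tx_n$ via the lattice homomorphism property, observe that $Tx_n$ lies in the solid hull of the image of an order bounded set (the paper uses $A=\{x_n:n\in\mathbb{N}\}$ itself where you use $[-w,w]$, an immaterial difference), and then apply $\mathbb{omo}$-$L$-weak compactness to get $Tx_n\moc 0$. Your explicit verification of the disjointness computation and of the standing sequential $\mathbb{mo}$-continuity requirement are details the paper leaves implicit, not a different argument.
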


\begin{proof}
Take an order bounded disjoint sequence $x_n$ in $X$. 
Since $T$ is lattice homomorphism, we have that $Tx_n$ is disjoint in $E$. 
Clearly $Tx_n\in \sol\big(\{Tx_n:n\in \mathbb{N}\}\big)$. 
By $\mathbb{omo}$-$L$-weakly compactness of $T$, we have $Tx_n\moc 0$ in $E$. Therefore, $T$ is $\mathbb{omo}$-$M$-weakly compact.
\end{proof}

{\tiny 

}
\end{document}